\numberwithin{equation}{section}
\newcommand{\ben}{\begin{enumerate}}
\newcommand{\een}{\end{enumerate}}
\newcommand{\bea}{\begin{eqnarray}}
\newcommand{\ba}{\begin{array}}
\newcommand{\bean}{\begin{eqnarray*}}
\newcommand{\ea}{\end{array}}
\newcommand{\eea}{\end{eqnarray}}
\newcommand{\eean}{\end{eqnarray*}}
\newcommand{\beq}{\begin{equation}}
\newcommand{\eeq}{\end{equation}}
\newcommand{\bthm}{\begin{thm}}
\newcommand{\ethm}{\end{thm}}
\newcommand{\blem}{\begin{lem}}
\newcommand{\elem}{\end{lem}}
\newcommand{\bprop}{\begin{prop}}
\newcommand{\eprop}{\end{prop}}
\newcommand{\bcor}{\begin{cor}}
\newcommand{\ecor}{\end{cor}}
\newcommand{\bdfn}{\begin{dfn}}
\newcommand{\edfn}{\end{dfn}}
\newcommand{\brem}{\begin{rem}}
\newcommand{\erem}{\end{rem}}
\newcommand{\bpf}{\begin{proof}}
\newcommand{\epf}{\end{proof}}
\newcommand{\bfact}{\begin{fact}}
\newcommand{\efact}{\end{fact}}
\newcommand{\bobs}{\begin{obs}}
\newcommand{\eobs}{\end{obs}}
\newtheorem{thm}{Theorem}[section]
\newtheorem{prop}[thm]{Proposition}
\newtheorem{lem}[thm]{Lemma}
\newtheorem{cor}[thm]{Corollary}
\newtheorem{dfn}[thm]{Definition}
\newtheorem{rem}[thm]{Remark}
\newtheorem{fact}[thm]{Fact}
\newtheorem{obs}[thm]{Observation}
                    \def\cC{\mathcal C}
\def\cL{{\mathcal L}}                   
\def\cS{\mathcal S}
\def\cW{\mathcal W}
                      \def\R{{\mathbb R}}
\def\1{1\!\!1}
\def\and{\text{ and }}
        \def\diam{\text{\rm {diam}}}
      \def\lra{\longrightarrow}
\def\h{\rm{h}}
\def\hmu{\h_\mu}
\def\Int{\text{{\rm Int}}}
         \def\P{\text{{\rm P}}}
\def\L{{\mathcal L}}                   \def\Pa{{\mathcal P}}
\def\a{\alpha}                \def\b{\beta}             \def\d{\delta}
                           \def\l{\lambda}
              \def\om{\omega}           
               \def\sg{\sigma}
\def\bi{\bigcap}              \def\bu{\bigcup}
\def\({\bigl(}                \def\){\bigr)}
\def\lt{\left}                \def\rt{\right}
\def\ld{\ldots}                        \def\^{\tilde}
\def\es{\emptyset}            \def\sms{\setminus}
\def\sbt{\subset}
      \def\imp{\Rightarrow}
\def\ov{\overline}
\def\om{\omega}
\begin{document}
\title[]
{ \bf\large {Geometry of measures in random systems with complete connections}}
\date{\today}
\author[\sc Eugen MIHAILESCU]{\sc Eugen Mihailescu}
\address{Eugen Mihailescu,
Institute of Mathematics of the Romanian Academy,
Calea Grivitei 21, P.O Box 1-764,
RO 014700, Bucharest, Romania}
\email{Eugen.Mihailescu@imar.ro  \  \ \hspace*{0.5cm}
Web: www.imar.ro/$\sim$mihailes}
\author[\sc Mariusz URBA\'NSKI]{\sc Mariusz URBA\'NSKI}
\address{Mariusz Urba\'nski, Department of Mathematics,
 University of North Texas, Denton, TX 76203-1430, USA}
\email{urbanski@unt.edu \ \  \hspace*{0.5cm} Web: www.math.unt.edu/$\sim$urbanski}
%
%
\date{}
\thanks{Research of the first author supported  by project PN-III-P4-ID-PCE-2020-2693 ``Dimensions and invariance in dynamical systems" from Ministry of Research and Innovation, CNCS/CCCDI-UEFISCDI Romania.  Research of the second author supported  by the NSF grant DMS 0400481.}

\begin{abstract}
 We study new relations between countable iterated function systems (IFS) with overlaps, Smale endomorphisms and random systems with complete connections. We prove that  stationary measures for countable conformal  IFS with overlaps and place-dependent probabilities, are exact dimensional; moreover we determine their Hausdorff dimension. Next, we construct a family of fractals in the limit set of a countable IFS with overlaps $\mathcal S$, and study the dimension for certain measures supported on these subfractals. In particular, we obtain  families of  measures on these subfractals which are related to the geometry of the system $\mathcal S$.
\end{abstract}
\maketitle
\textbf{MSC 2010:} 28A80, 37A05, 37D35, 37C45, 37H15, 30C35.

\textbf{Keywords:} Countable iterated function systems with overlaps; Hausdorff dimension; fractals; place-dependent probabilities; stationary measures;  projections of Gibbs measures; Smale endomorphisms; random systems with complete connections;  transfer operators. 

\section{Introduction}

In this paper we study relations between countable conformal iterated function systems (IFS) with arbitrary overlaps, Smale endomorphisms, and random systems with complete connections, from the point of view of their geometric and ergodic properties. We provide a common framework for studying measures with certain invariance properties and their dimensions in these systems.

Conformal iterated function systems were studied in many settings, and their invariant measures and  limit sets have attracted a lot of interest in the literature, for eg \cite{BDEG}-\cite{BV}, \cite{Fa}, \cite{FL}, \cite{FH}, \cite{H}, \cite{LNW}, \cite{M-CCM},  \cite{MU-Adv}, \cite{PS},  \cite{S}, to cite a few. Finite iterated function systems with place-dependent probabilities (weights) were introduced and studied by Barnsley, Demko, Elton, Geronimo in \cite{BDEG}; see also for eg \cite{BD}, \cite{BV} and others. The dimension theory for hyperbolic  endomorphisms was studied for eg in \cite{Pe}, \cite{Y},  \cite{M-stable}-\cite{MS}, and Smale endomorphisms were introduced and studied in \cite{MU-ETDS}.

Random systems with complete connections  were introduced and studied by Iosifescu and Grigorescu in \cite{IG} (see also \cite{GP}), and are generalizations of the chains with complete connections  introduced by Onicescu and Mihoc in \cite{OM2}. 

In the sequel, we first define/recall the notions of countable IFS with overlaps and place-dependent probabilities, the notion of random systems with complete connections, and the notion of Smale endomorphisms. A countable IFS with place-dependent probabilities is a particular case of random system with complete connections. Also one can associate random systems with complete connections,  to Smale endomorphisms.

In Section \ref{stationary}, given a countable IFS $\mathcal S$ with \textit{place-dependent probabilities} $\{p_i(\cdot), i \in I\}$ and \textit{arbitrary overlaps}, we find  its stationary measure.
Then  in Theorem \ref{infinitepdp} we prove the \textit{exact dimensionality} of such a stationary measure, and find its \textit{Hausdorff dimension} (and pointwise, box dimension). In general, the exact dimensionality largely characterizes the local and global metric properties of the respective measure.

Next, in Section \ref{maximal} for an arbitrary countable IFS with overlaps $\mathcal S$ which satisfies a condition of pointwise non-accumulation, we associate a \textit{maximal Smale endomorphism} to it. Using this method we  form a family of \textit{random subfractals} in the limit set $\Lambda$ of $\mathcal S$, corresponding to subsystems of iterations. In Theorem \ref{exactmax} we determine the \textit{pointwise dimension} for a class of invariant measures supported on these random subfractals in $\Lambda$. In particular we obtain families of  measures associated to certain potentials $\psi_s$, which are related to the geometry and overlappings of the system $\mathcal S$.

\

Recall first that a finite measure $\mu$ on a metric space $X$ is  \textit{exact dimensional} (for eg \cite{Pe}, \cite{Y}) if  there exists a number $ \delta \ge 0$ such that for $\mu$-a.e $x \in X$, $$\mathop{\lim}\limits_{r \to 0}\frac{\log \mu(B(x, r))}{\log r} = \delta.$$ In this case, it follows that $HD(\mu) = \delta$. Exact dimensionality of a measure $\mu$ is a strong geometric property, and implies that the fractal dimensions of $\mu$ (Hausdorff, pointwise, box dimension) coincide. 

For finite conformal IFS with overlaps, the exact dimensionality of projections of ergodic  measures was proved in \cite{FH}. For countable conformal IFS with overlaps, the exact dimensionality of projections of ergodic measures satisfying a finite entropy condition was proved in \cite{MU-Adv}. This property  was also studied  for hyperbolic diffeomorphisms for eg in \cite{Pe}, \cite{Y}, and for hyperbolic endomorphisms (non-invertible maps) for eg in \cite{M-stable}. 

\

Now, let us recall the three main notions  used in the sequel:

The notion of \textit{finite iterated function systems with place-dependent probabilities} was introduced in \cite{BDEG}; see also \cite{BD}, \cite{BV}, \cite{FL}, \cite{L}, \cite{LP}, \cite{S}. These iterated function systems are particular cases of chains with complete connections introduced in \cite{OM2}; see also the papers \cite{DF}, \cite{ITM}. 

A \textit{chain with complete connections} is a sequence of random variables $\xi_1, \xi_2, \ldots$ taking real values in a countable set $\Omega$, where the probability that at step $n$, $\xi_n$ takes value $\omega \in \Omega$, depends on the values taken by all previous random variables. Thus,
$$
P(\xi_{n+1} = \omega|\xi_n = \omega_0, \xi_{n-1}=\omega_{-1}, \ldots) = P(c, \omega),
$$
where $\omega \in \Omega$ and $c = (\ldots, \omega_{-1}, \omega_0)$ is a trajectory in 
$
\Sigma^-_\Omega:= \mathop{\prod}\limits_{j\in \mathbb Z, j \le 0} \Omega
$.
  One assumes in general that $P^1(c, \omega) = P(c, \omega)$ and for every $n \ge 1$, 
$$
P^{n+1}(c, \omega) = \mathop{\sum}\limits_{\omega' \in \Omega} P(c, \omega') P^n(a(c, \omega'), \omega),
$$
where  $a(c, \omega'):= (\ldots, \omega_{-1}, \omega_0, \omega')$, for $c = (\ldots, \omega_{-1}, \omega_0)\in \Sigma^-_\Omega$.

In the sequel, we extend  the notion of finite IFS with place-dependent probabilities to \textit{countable iterated function systems with overlaps and place-dependent probabilities}. Such a system $\mathcal S$ is determined by the continuous functions   
$
\phi_i: V \lra V, \  \  i \in I
$, defined on a compact set $V \subset \mathbb R^D$ 
and indexed by a countable set $I$, and by the continuous probability functions (weights),  
$
p_i: V \lra [0, 1], \  i \in I
$,
satisfying
$$
\mathop{\sum}\limits_{i\in I} p_i(x) = 1. $$

 By \textit{IFS with overlaps} we mean that the sets $\phi_i(V), i \in I$ may intersect in any way.  So the overlaps are arbitrary, and we do not assume any kind of Open Set Condition (\cite{H}, \cite{Fa}). 
 Assume also that there exists a number $s \in (0, 1)$ such that,
\begin{equation}\label{infIFS}
|\phi_i(x) - \phi_i(y) | \le s |x-y|, \ \forall i \in I, \ x, y \in V.
\end{equation}

For properties of (finite or countable) conformal iterated systems with various  types of overlaps see for eg \cite{Fa}, \cite{FH}, \cite{LNW}, \cite{M-CCM}, \cite{MU-CM}, \cite{NT}, \cite{PS}. 
The countable IFS case is different from the finite case, since the fractal limit set may be \textbf{non-compact}, and many methods from the finite case do not work. 

\

For $x \in V$ and a Borel set $B \subset V$, the \textit{probability of transfer} from $x$ to $B$ is equal to
$$
P(x, B) = \mathop{\sum}\limits_{i \in I} p_i(x) \delta_{\phi_i(x)}(B)
.$$
We have then the associated \textit{transfer operator}: 
$$
\L g(x) = \int_V g(y) P(x, dy) = \mathop{\sum}\limits_{i
\in I} p_i(x) g(\phi_i(x)),
$$
where $g:V \to \mathbb R$ is measurable. If $M(V)$ denotes the space of finite signed Borel measures on $V$, then the operator $\L^*$ adjoint to $\L$, restricted to the space $M(V)$, is given by  
\begin{equation}\label{Vmu}
\L^*\mu(B) = \int P(x, B) d\mu(x) = \mathop{\sum}\limits_{i \in I} \int_{\phi_i^{-1}B} p_i(x) d\mu(x)
\end{equation}
A Borel probability measure $\mu$ on $V$ is called   \textit{stationary} for the above system if 
\begin{equation}\label{L}
\L^*\mu = \mu,
\end{equation}
and \textit{attractive} if for all probabilities $\nu$ on $V$ and all bounded measurable $g:V \to \R$, 
$$
\lim_{n\to\infty}\int_X g\, d(\L^{*n} \nu) = \int_V g\, d\mu.
$$

One of the central problems in the theory of chains with complete connections and of IFS with place--dependent probabilities is to find stationary measures and to study their ergodic and metric properties. 
In many of these results in the finite case, the probability functions $p_i(\cdot)$  satisfy a H\"older type condition (\cite{BDEG}, \cite{BD}, \cite{IG}).

\

The second main notion is that of \textit{random systems with complete connections},  which are generalizations of chains with complete connections (for eg \cite{GP}, \cite{IG}). 
\begin{dfn}[\cite{IG}]\label{RSCC}
A random system with complete connections (or RSCC) is a quadruple $((W, \mathcal W), (X, \mathcal X), u, P)$ where:

i) $(W, \mathcal W)$ and $(X, \mathcal X)$ are measurable spaces, 

ii) $u: W\times X \to W$ is a measurable map, with the product $\sigma$-algebra $\mathcal W \times \mathcal X$ on $W\times X$, 

iii) $P$ is a transition probability function from $(W, \mathcal W)$ to $(X, \mathcal X)$, i.e. $P(w, \cdot)$ is a probability on $\mathcal X$ for any $w \in W$ and $P(\cdot, A)$ is a random variable on $Z$ for any set $A \in \mathcal X$.

We call $W$ the \textit{state space} and $X$ the \textit{index space}.  $W$ is assumed to be a locally compact and $\sigma$-compact metric space. For $\mathcal W$ we take the $\sigma$-algebra generated by open sets of $W$.
\end{dfn}

 An example of RSCC is  an urn scheme with replacement. Consider an initial urn $U_0$, which  contains $a_j = a_j^{(0)}$ balls of color $j,  1\le j \le m$.  If on trial $n\ge 1$ we extract a ball of color $j$, then this ball is replaced together with $d_j$ balls of same color (the rest of the balls being left unchanged), hence $a_i^{(n)} = a_i^{(n-1)} + \delta_{ij} d_i, 1 \le i \le m,$
 with $d_1, \ldots, d_m$ being non-negative integers. Thus the probability of choosing a certain color at step $n$ depends on all previous steps.

 \begin{rem}\label{ifsrsc}
If in Definition \ref{RSCC} the index space $X$ is countable and $\mathcal X$ is the algebra of subsets of $X$, and we define the maps $\phi_i(w):= u(w, i)$, then we obtain a countable IFS on $W$. Denote $x^{(n)}:= (x_1, \ldots, x_n) \in X^n$, $n \ge 1$. By induction define  $u^{(n)}:W \times X^n \to W$,
$$u^{(n+1)}(w, x^{(n+1)}) = u(w, x_1), n = 0, \ \text{and} \ u^{(n+1)}(w, x^{(n+1)}) = u(u^{(n)}(w, x^{(n)}), x_{n+1}), n \ge 1$$ 
We can write $wx$ for $u(w, x)$, and $wx^n$ for $u^{(n)}(w, x^{(n)})$.
For $w \in W$ and $A$ Borel set in $X$, let $P_1(w, A) = P(w, A)$, and if  $w \in W, m > 1$ and $A$ Borel set in $X^m$, let the $m$-transfer probability of $w$ into $A$ be,
$$P_m(w, A) = \int_X P(w, dx_1) \int_X P(wx_1, dx_2) \ldots \int_XP(wx^{(m-1)}, dx_m) d\chi_A(x^{(m)}$$
And for $w \in W, n, m \ge 1$ and $A\subset X^m$, define \ $P_m^n(w, A) = P_{n+m-1}(w, X^{n-1}\times A)$.
\end{rem}

The following existence result was proved in \cite{IG}.
\begin{thm}\label{existsta}
Let a random system with complete connections $\{(W, \mathcal W), (X, \mathcal X), u, P)$ and an arbitrary given point $w_0 \in W$. 
 Then, there exist a probability space $(\Omega, \mathcal K, P_{w_0})$ and a sequence $(\xi_n)_{n \ge 1}$ of $X$-valued random variables defined on $\Omega$ such that, for all $m, n, q \ge 1$ and $A\in \mathcal X^m$, we have:

i) $P_{w_0}([\xi_n, \ldots, \xi_{n+m-1}]\in A) = P_m^n(w_0, A)$, 

ii) $P_{w_0}([\xi_{n+q}, \ldots, \xi_{n+q+m-1}]\in A|\xi^{(n)}) = P_m^q(w_0\xi^{(n)}, A)$, $P_{w_0}$-a.e.

\end{thm}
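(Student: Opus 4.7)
The plan is to realize $(\xi_n)_{n\ge 1}$ as the coordinate process on the product space, using the Ionescu--Tulcea theorem to build $P_{w_0}$ from the one-step kernels inherited from $P$ and $u$. Concretely, set $\Omega:=X^{\mathbb N}$, let $\mathcal K$ be the product $\sigma$-algebra generated by $\mathcal X$, and define $\xi_n:\Omega\to X$ to be the $n$th coordinate projection. For each $n\ge 1$, define a transition kernel $K_n$ from $X^{n-1}$ to $X$ by
\[
K_n\bigl(x^{(n-1)};A\bigr):=P\bigl(u^{(n-1)}(w_0,x^{(n-1)}),A\bigr),\qquad A\in\mathcal X,
\]
with $K_1(\cdot;A):=P(w_0,A)$. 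Measurability of $(x^{(n-1)},A)\mapsto K_n(x^{(n-1)};A)$ follows from the measurability of $u$ (and hence inductively of $u^{(n-1)}$) as maps into $W$, together with the assumption that $P(\cdot,A)$ is $\mathcal W$-measurable for every $A\in\mathcal X$.

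Next, I invoke the Ionescu--Tulcea extension theorem, which produces a unique probability $P_{w_0}$ on $(\Omega,\mathcal K)$ whose finite-dimensional marginals are the iterated integrals of $K_1,K_2,\ldots$. By construction, for every $m\ge 1$ and every $A\in\mathcal X^m$,
\[
P_{w_0}\bigl((\xi_1,\ldots,\xi_m)\in A\bigr)
=\int_X\!K_1(dx_1)\int_X\!K_2(x_1;dx_2)\cdots\int_X\!K_m(x^{(m-1)};dx_m)\,\chi_A(x^{(m)}),
\]
which is exactly $P_m(w_0,A)$ as defined in Remark \ref{ifsrsc}. This establishes the case $n=1$ of (i). For general $n\ge 1$, the marginal of $(\xi_n,\ldots,\xi_{n+m-1})$ is obtained by integrating the above identity, with $m$ replaced by $n+m-1$, freely over the first $n-1$ coordinates in $X^{n-1}$; this yields $P_{n+m-1}(w_0,X^{n-1}\times A)$, which is precisely $P_m^n(w_0,A)$.

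For (ii) I would use the standard conditioning property of Ionescu--Tulcea measures: conditional on $(\xi_1,\ldots,\xi_n)=x^{(n)}$, the process $(\xi_{n+1},\xi_{n+2},\ldots)$ has the law of the Ionescu--Tulcea construction with the shifted kernels $K_{n+k}(x^{(n)},\,\cdot\,;\,\cdot\,)$, $k\ge 1$. Because $u^{(n+k-1)}(w_0,x^{(n+k-1)})=u^{(k-1)}\bigl(u^{(n)}(w_0,x^{(n)}),\,x_{n+1},\ldots,x_{n+k-1}\bigr)$, these shifted kernels coincide with the kernels one would build starting from the state $w_0 x^{(n)}:=u^{(n)}(w_0,x^{(n)})$. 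Applying part (i) to this shifted system at block position $q$ gives exactly $P_m^q(w_0\xi^{(n)},A)$, proving (ii) $P_{w_0}$-almost everywhere.

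The main technical obstacle is the verification that the induced kernels really are measurable in their conditioning arguments, which reduces to checking that the iterates $u^{(n)}:W\times X^n\to W$ are jointly measurable; this follows by induction from the joint measurability of $u$ with respect to $\mathcal W\otimes\mathcal X$. Once this is in place, both (i) and (ii) are formal consequences of the Ionescu--Tulcea construction and the associativity relation $u^{(n+k-1)}=u^{(k-1)}\circ(u^{(n)}\times\mathrm{id})$ used above.
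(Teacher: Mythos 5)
Your construction is correct: the paper itself gives no proof of Theorem \ref{existsta}, quoting it directly from \cite{IG}, and the Ionescu--Tulcea extension theorem applied to the kernels $K_n(x^{(n-1)};A)=P\bigl(u^{(n-1)}(w_0,x^{(n-1)}),A\bigr)$ is precisely the classical argument given there (the extension theorem of \cite{ITM} having been devised for exactly this purpose). Both the identification of the finite-dimensional marginals with $P_m^n(w_0,\cdot)$ and the use of the cocycle identity for $u^{(n)}$ to identify the conditional law with the system restarted at $w_0\xi^{(n)}$ are the standard steps, so your proposal matches the intended proof.
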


\

Finally, the third main notion we will use is that of \textit{Smale skew product endomorphism}, introduced in \cite{MU-ETDS}. 
Let $I$ be a countable alphabet, and let $\Sigma_I^+$ be the associated 1-sided shift space, with shift map $\sigma: \Sigma_I^+ \to \Sigma_I^+$. 
Given $\b>0$, the metric $d_\b$ on $\Sigma_I^+$ is:
$$
d_\b\((\om_n)_0^{\infty},(\tau_n)_0^{\infty}\)
=\exp\(-\b\max\{n\ge 0:(0\le k\le n) \imp \om_k=\tau_k\}\)
$$
with the standard convention that $e^{-\infty}=0$. All metrics $d_\b$, $\b>0$, on $\Sigma_I^+$ are H\"older continuously equivalent
and all induce the product topology on $\Sigma_I^+$. 
We define also the 2-sided shift space $\Sigma_I$ with the same metric as above.

For every $\om\in \Sigma_I$ and integers $m\le n$, define the $(m, n)$-truncation
$
\om|_m^n=\om_m\om_{m+1}\ld\om_n.
$
Let $\Sigma_I^*$ be the set of finite words. For
 $\tau=\tau_m\tau_{m+1}\ld\tau_n$, the cylinder from $m$ to $n$ is
$
[\tau]_m^n=\{\om\in \Sigma_I: \om|_m^n=\tau\}
$.
The family of cylinders from $m$ to $n$ is
denoted by $C_m^n$. 
If $m=0$, write $[\tau]$ for $[\tau]_m^n$. 

Let $\psi: \Sigma_I \to\R$ continuous. Topological pressure plays an important role in thermodynamic formalism and extends the notion of entropy (for eg \cite{Ru}). By extension, in our case the \textit{topological pressure} of $\psi$ is:
$\P(\psi)$ is,
\beq\label{1_2015_11_04}
\P(\psi)
:=\lim_{n\to\infty}\frac1n\log\sum_{\om\in C_0^{n-1}}\exp\(\sup\(S_n\psi|_{[\om]}\)\),
\eeq
where the limit above exists by subadditivity. 
 A shift-invariant
Borel probability $\mu$ on the 2-sided shift space $\Sigma_I$ with countable alphabet $I$,  is called a \textit{Gibbs measure}
of $\psi$ if there exist constants $C\ge 1$, $P\in\R$
such that
\beq\label{1082305}
C^{-1}
\le {\mu([\om|_0^{n-1}])\over \exp(S_n\psi(\om)-Pn)}
\le C
\eeq
for all $n\ge 1, \om\in \Sigma_I$.  From (\ref{1082305}), if $\psi$ admits a Gibbs
state, then $P=\P(\psi)$. 
The function $\psi:\Sigma_I \to\R$ is called \textit{summable} if 
$$
\sum_{e\in E}\exp\(\sup\(\psi|_{[e]}\)\)<\infty
$$
In \cite{MU-ETDS} we  proved that a H\"older continuous $\psi: \Sigma_I \to\R$ is summable if and only if $\P(\psi)<\infty$, and that for every H\"older continuous summable function $\psi: \Sigma_I \to\R$ 
there exists a unique Gibbs state $\mu_\psi$ on $\Sigma_I$, and the measure
$\mu_\psi$ is ergodic. We also showed that if $\psi: \Sigma_I \to\R$ is a H\"older continuous summable 
function,  then
$$
\sup\lt\{\hmu(\sg)+\int_{\Sigma_I}\!\!\!\psi d\mu:\mu\circ\sg^{-1}=\mu 
   \  \text{ and }  \  \int\!\!\psi d\mu>-\infty\rt\}
=\P(\psi)
={\hmu}_\psi(\sg)+\int_{\Sigma_I}\!\!\psi d\mu_\psi,
$$
and this supremum is reached only at $\mu_\psi$. We mention that similar results were proved in \cite{gdms} on the 1-sided shift space $\Sigma_I^+$, and in \cite{MU-ETDS} we extended them  on the 2-sided space $\Sigma_I$.

Consider now the measurable partition of $\Sigma_I$,
\begin{equation}\label{parti}
\Pa_-=\{[\om|_0^{\infty}]:\om\in \Sigma_I\}
     =\{[\om]:\om\in \Sigma_I^+\}.
\end{equation}

 If $\mu$
is a probability on $\Sigma_I$, let the family of canonical conditional measures of $\mu$ associated to $\Pa_-$ (\cite{Ro}), $$
\{\ov\mu^\tau:\tau\in \Sigma_I^+\}.
$$ 
Then $\ov\mu^\tau$ is a probability
measure on the cylinder $[\tau|_0^{+\infty}]$ and we also write $\ov\mu^\om$, $\om\in \Sigma_I^+$, for the 
conditional measure on $[\om]$. The canonical projection (truncation) is:
$$
\pi_0: \Sigma_I \to \Sigma_I^+, \  \pi_0(\tau) = \tau|_0^\infty, \tau \in \Sigma_I,
$$
Recall that the system $\{\ov\mu^\om:\om\in \Sigma_I^+\}$ of conditional measures is uniquely determined (up to measure zero), by the property (\cite{Ro}),
$$
\int_{\Sigma_I}g\,d\mu=\int_{\Sigma_I^+}\int_{[\om]}g\,d\ov\mu^{\om}
  \,d(\mu\circ \pi_0^{-1})(\om), \
\forall g\in L^1(\mu).$$ 

\

We introduced the notion of \textit{Smale skew product endomorphisms}.

\begin{dfn}\label{Smaleskew}\cite{MU-ETDS}
Let $(Y,d)$ be a complete bounded metric space. For every $\om
\in \Sigma_I^+$ let $Y_\om\sbt Y$ be an arbitrary set and let
$
T_\om:Y_\om\lra Y_{\sg(\om)}
$
be a continuous injective map. Let  
$
\hat Y:=\bu_{\om\in \Sigma_I^+}\{\om\}\times Y_\om\sbt \Sigma_I^+\times Y,
$
and define the map 
$
T:\hat Y\lra\hat Y, 
T(\om,y)=(\sg(\om),T_\om(y)).
$
The pair $(\hat Y,T:\hat Y\to\hat Y)$ is called a \textit{model
Smale endomorphism} if there exists $\l>1$ such 
that for all $\om\in \Sigma_I^+$ and all $y_1,y_2\in Y_\om$,
$d(T_\om(y_2),T_\om(y_1))\le \l^{-1}d(y_2,y_1)$.
\end{dfn}

If $\tau = (\tau_{-n}, \ldots, \tau_0, \tau_1, \ldots)$ let  
$
T_\tau^n
=T_{\tau|_{-1}^{\infty}}\circ T_{\tau|_{-2}^{\infty}}\circ\ld
   \circ T_{\tau|_{-n}^{\infty}}:Y_\tau\lra Y_{\tau|_0^{\infty}}
$. If $\tau\in \Sigma_I$ let, 
$$
T_\tau^n
:=T_{\tau|_{-n}^{\infty}}^n
:=T_{\tau|_{-1}^{\infty}}\circ T_{\tau|_{-2}^{\infty}}\circ\ld
  \circ T_{\tau|_{-n}^{\infty}}:Y_{\tau|_{-n}^{\infty}}\longrightarrow Y_{\tau|_0^{\infty}}
$$
Then  the sets $\(T_\tau^n\(Y_{\tau|_{-n}^{\infty}}\)\)_{n=0}
^\infty$ form a  descending sequence, and 
$
\diam\(\ov{T_\tau^n\(Y_{\tau|_{-n}^{\infty}}\)}\)\le \l^{-n}\diam(Y).
$
As $(Y,d)$ is complete, $
\bi_{n=1}^\infty \ov{T_\tau^n\(Y_{\tau|_{-n}^{\infty}}\)}
$
is a point denoted by $\hat\pi_2(\tau)$, which defines the map
\begin{equation}\label{pi2hat}
\hat\pi_2: \Sigma_I \lra Y,
\end{equation}
and  define also $\hat\pi: \Sigma_I \to \Sigma_I^+\times Y$ by 
\beq\label{5111705p141}
\hat\pi(\tau)=\(\tau|_0^{\infty},\hat\pi_2(\tau)\),
\eeq
and the truncation to non-negative indices by 
$\pi_0: \Sigma_I \lra \Sigma_I^+, \  \  \pi_0(\tau) = \tau|_0^\infty$.

Now assume $Y_\omega = Y, \forall \omega \in \Sigma_I^+$, and for an arbitrary $\om\in \Sigma_I^+$ denote the $\hat \pi_2$-projection of the cylinder $[\om]\subset \Sigma_I$, 
$
J_\om:=\hat\pi_2([\om])\subset Y,
$
and call these sets the \textit{stable Smale fibers} of  $T$. The global invariant set,
$$
J:=\hat\pi(\Sigma_I)=\bu_{\om\in \Sigma_I^+}\{\om\}\times J_\om\sbt \Sigma_I^+\times Y,
$$
is called the \textit{Smale space} induced by $T$. 
 Then the skew-product system
\begin{equation}\label{J}
T:J\lra J,
\end{equation} is called the \textit{Smale endomorphism} generated by  $T:\hat Y\lra \hat Y$.

Now suppose more conditions about  $Y_\om$, $\om\in \Sigma_I^+$ and  the maps $T_\om:Y_\om\to Y_{\sg(\om)}$, namely:
\begin{itemize}
\item[(a)] $Y_\om$ is a closed bounded subset of $\R^d$, with some $d\ge 1$ such that $\ov{\Int(Y_\om)}=Y_\om$. 

\item[(b)] Each map $T_\om:Y_\om\to Y_{\sg(\om)}$ extends to a $C^1$ conformal embedding from $Y_\om^*$ to $Y_{\sg(\om)}^*$, where $Y_\om^*$ is a bounded connected open subset of $\R^d$ containing $Y_\om$. Then $T_\om$ denotes also this extension and assume that the maps
$
T_\om:Y_\om^*\to Y_{\sg(\om)}^*$ satisfy:
\item[(c)]  There is $\lambda>1$ so that 
$d(T_\om(y_1), T_\om(y_2))\le \l^{-1}d(y_1,y_2), \forall \om\in \Sigma_I^+,  y_1,y_2\in Y_\om^*$.
\item[(d)] (Bounded Distortion Property 1) There are constants $\a>0, H>0$ s.t $\forall y, z\in Y_\om^*$, \ 
$$
\big|\log|T_\om'(y)|-\log|T_\om'(z)|\big|\le H||y-z||^\a.
$$
\item[(e)] The function 
$
\Sigma_I\ni\tau\longmapsto\log|T_\om(\hat\pi_2(\eta))|\in\R
$
is H\"older continuous, where $\om = \pi_0(\tau)$.
\item[(f)] (Open Set Condition) For every $\om\in \Sigma_I^+$ and for all $a, b\in I$ with  $a\ne b$, we have \ 
$
T_{a\om}(\Int(Y_{a\om}))\cap T_{b\om}(\Int(Y_{b\om}))=\es.
$
\item[(g)] (Strong Open Set Condition) There exists a measurable function $\d:\Sigma_I^+\to(0,\infty)$ such that for every $\om \in \Sigma_I^+$, \ 
$
J_\om\cap\(Y_\om\sms\ov B(Y_\om^c,\d(\om)\)\ne\es.
$
\end{itemize}

 A skew product Smale endomorphism satisfying conditions (a)--(g) will be called in the sequel a \textit{conformal Smale endomorphism}. 

\

We see that a countable IFS with place-dependent probabilities is a particular case of random system with complete connections with state space $V$,  index space =  countable alphabet $I$, and probability transition function determined by  $p_i(\cdot), i \in I$, namely
\begin{equation}\label{ptf}
P(x, B) = \mathop{\sum}\limits_{i \in I} p_i(x) \delta_{\phi_i(x)}(B)
\end{equation}

\

Also to a conformal Smale endomorphism $T$ with fibers $J_\om, \om \in \Sigma_I^+$, we can associate a random system with complete connections. Recall that $\pi_0:\Sigma_I \to \Sigma_I^+, \pi_0(\eta) = \eta|_0^\infty, \eta \in \Sigma_I$.

\begin{thm}\label{ev}
Let  the conformal Smale endomorphism $T: J \to J$ defined in (\ref{J}), where we assume the spaces  $Y_\omega = Y\subset \R^d, \forall \om \in \Sigma_I^+$. Consider also a shift  invariant measure $\mu$ on $\Sigma_I$. Define the quadruple $((W, \mathcal W), (X, \mathcal X), u, P)$ by:

 a) State space $W = \{(\om, x), \ x \in J_\om, \om \in \Sigma_I^+\}$ with the Borel $\sigma$-algebra $\cW$ induced from the Borel $\sigma$-algebra of the product space $\Sigma_I^+ \times Y\subset \Sigma_I^+ \times \R^d$, and index space $X = \Sigma_I^+$ with its Borel $\sigma$-algebra $\mathcal X$;
 
 b) For $\tau \in \Sigma_I^+$ and $w \in W, w = (\om, x), x \in J_\om$, define the map $u(w, \tau) = (\sigma \om, T_\om(x))$;
  
  c) For those $\om \in \Sigma_I^+$ for which the conditional measure $\bar \mu^\om$ is defined (their set has $\pi_{0*}\mu$-measure equal to $1$), and for $w = (\om, x), x \in J_\om$, define the probability transition function $P_w(\cdot)$ by $ P_w := \bar\mu^\om$.

  Then, $((W, \mathcal W), (X, \mathcal X), u, P)$ is a random system with complete connections.
\end{thm}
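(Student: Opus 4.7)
The plan is to verify, one at a time, the three defining conditions of Definition \ref{RSCC} for the quadruple described: the measurable-space structure on $W$ and $X$, the measurability of the map $u$, and the transition-kernel property of $P$.

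First I would note that $W\subset \Sigma_I^+\times Y$ coincides with the image $\hat\pi(\Sigma_I)$ (cf.\ (\ref{5111705p141})), hence is a Borel subset of the Polish product space $\Sigma_I^+\times Y$ — the fibres $J_\om$ vary Borel-measurably in $\om$ as the nested intersections of the continuous images $T_\tau^n(Y_{\tau|_{-n}^\infty})$ used to define $\hat\pi_2$. It thus inherits the restricted Borel $\sigma$-algebra $\mathcal W$. Similarly, $(X,\mathcal X)=(\Sigma_I^+,\text{Borel}(d_\b))$ is a standard Borel space, giving condition (i).

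Next, for condition (ii), observe that $u(w,\tau)=(\sg\om,T_\om(x))$ does not depend on $\tau$, so $u$ factors as $u=T\circ\pi_W$, where $\pi_W:W\times X\to W$ is the projection and $T:W\to W$ is the Smale endomorphism of (\ref{J}). The skew product $T$ is continuous: the base map $\sg$ is continuous on $\Sigma_I^+$, while the Hölder-type dependence of the fibre maps $T_\om$ on $\om$ built into conditions (b)--(e) of the conformal Smale endomorphism yields continuity of $(\om,x)\mapsto T_\om(x)$ on $W$. So $u$ is Borel measurable as a composition of Borel maps.

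The crux is condition (iii), which I would verify by invoking Rokhlin's disintegration theorem applied to the shift-invariant probability $\mu$ on the standard Borel space $\Sigma_I$, relative to the Borel projection $\pi_0:\Sigma_I\to\Sigma_I^+$. This yields the canonical family $\{\bar\mu^\om:\om\in\Sigma_I^+\}$ from (\ref{parti}), consisting of probability measures on the cylinders $[\om]\subset\Sigma_I$ for $\om$ in a set of full $\pi_{0*}\mu$ measure, which we identify with subsets of $X=\Sigma_I^+$ in the natural way (this identification is the one point in the construction requiring clarification, but once fixed it does not affect the argument). Setting $P_w:=\bar\mu^\om$ for $w=(\om,x)$, each $P_w$ is then a Borel probability on $\mathcal X$. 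The uniqueness clause of Rokhlin's theorem guarantees that $\om\mapsto\bar\mu^\om(A)$ is measurable for every Borel $A\subset X$; composing with the measurable first projection $w\mapsto \om$ gives the required measurability of $w\mapsto P_w(A)$.

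The main obstacle is modest: Rokhlin's theorem defines $\bar\mu^\om$ only on a set of full $\pi_{0*}\mu$ measure, not necessarily for every $\om\in\Sigma_I^+$. On the exceptional null set one extends $P_w$ in a measurable way (for instance as a fixed reference probability on $\mathcal X$, or as a Dirac mass at a chosen point); such an extension preserves the kernel property and is invisible to the trajectories produced by Theorem \ref{existsta} from initial states outside the null set. All three conditions of Definition \ref{RSCC} are then in force, establishing the result.
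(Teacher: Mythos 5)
Your proposal is correct and follows essentially the same route as the paper: checking conditions (i)--(iii) of Definition \ref{RSCC} directly, using the structure of the skew product (and condition (e) of the conformal Smale endomorphism) for the measurability of $u$, and Rokhlin's disintegration of $\mu$ over $\pi_0$ together with the uniqueness/measurability of the conditional family $\{\bar\mu^\om\}$ for the kernel property of $P$. Your additional remarks --- that $u$ does not actually depend on the index $\tau$, and that $P_w$ must be extended measurably off the $\pi_{0*}\mu$-null set where $\bar\mu^\om$ is undefined --- are sound refinements of points the paper passes over without comment.
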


\begin{proof}
On $W$ we take the $\sigma$-algebra of Borel sets induced from $\Sigma_I^+ \times \R^d$.
We defined the map $u(w, \tau) = (\sigma\om, T_\om(x))$, for $w = (\om, x) \in W$, $\om \in \Sigma_I^+$ and $x \in J_\om$. On the other hand recall that $J_\om$ is the set of points of the form $\hat\pi_2(\eta)$ for $\eta \in [\om] \subset  \Sigma_I$. So from (\ref{pi2hat}) the map $u(\cdot, \cdot)$ is well-defined, since if $x = \hat\pi_2(\eta) = T_{\eta_{-1}\om}\circ T_{\eta_{-2}\eta_{-1}\om} \circ \ldots \in J_\om$, then $$T_\om(x) = \hat\pi_2(\sigma\eta) \in J_{\sigma\om},$$ as $\sigma\eta \in [\sigma\om]$.
Then, we use condition (e) from the definition of conformal Smale endomorphisms, to obtain that $u: W\times X \to W $ is measurable. 

 Next, for $\pi_{0*}\mu$-a.e $\om \in \Sigma_I^+$, the conditional measure $\bar \mu^\om$ is defined on the cylinder $[\om]\subset \Sigma_I$, and this cylinder  can be identified with $\Sigma_I^+$.
If $A$ is a Borel set in $\Sigma_I^+$ and  $w = (\om, x) \in W, x \in J_\om$,  define $P(w, A) = \bar\mu^\om(A)$, thus $P_w$ can be viewed as a probability measure on $\Sigma_I^+$. 
From the uniqueness of the system of conditional measures associated to the partition $\mathcal P_-$ (see \cite{Ro}) with the property that $$\int_{\Sigma_I} g(\xi) d\mu = \int_{\Sigma_I^+}\int_{[\om]} g(\xi) d\bar\mu^\om(\xi) d\pi_{0*}\mu(\om),$$ for any integrable function $g: \Sigma_I \to \R$, we obtain that for any set $A$  as above, $P_w(A)$ depends measurably on $\om \in \Sigma_I^+$, where $w = (\om, x), x \in J_\om$.  
Hence the function $$P(\cdot, A): W \to \R, \ w \to P(w, A)$$ is measurable with respect to the $\sigma$-algebra $\mathcal W$ induced on $W$ from $\Sigma_I^+ \times \R^d.$
\end{proof}

\section{Stationary measures for countable systems with overlaps and place-dependent probabilities}\label{stationary}
 
In this section we study the case of \textit{countable IFS with overlaps and place-dependent probabilities}, and prove the exact dimensionality of stationary measures, and compute the pointwise (and Hausdorff) dimension. 

Consider a system of smooth contractions defined on a compact set $V \subset \mathbb R^D$  indexed by a countable alphabet $I$, 
$
\cS=\big\{\phi_i:V \lra V \big\}_{i \in I}
$
with limit set $\Lambda$, and the weights $p_i: V \to\R$, $i\in I$, satisfying for any $x \in V$,
\beq\label{sum1}
\mathop{\sum}\limits_{i \in I} p_i(x) = 1.
\eeq

 $\Sigma_I^+$ denotes the 1-sided shift space with alphabet $I$. If $i_1, \ldots, i_n \in I, n \ge 1$,  denote $$\phi_{i_1\ldots i_n} := \phi_{i_1}\circ \ldots \circ \phi_{i_n}.$$
Let $\pi:\Sigma_I^+ \to \Lambda, \pi(\om) = \mathop{\lim}\limits_{n\to \infty}\phi_{\om_0 \om_1 \ldots \om_n}$ if $\om=(\om_0, \om_1, \ldots) \in \Sigma_I^+$, be the canonical coding map for the limit set $\Lambda$.
 
Assume also that $p_i(\cdot)$ depend uniformly H\"older continuously on $x\in V$, for $i \in I$, i.e. there exist constants $\alpha, C >0$ such that for all $i \in I$ and all $x, y \in V$,
\begin{equation}\label{Ho}
|p_i(x) - p_i(y)| \le C |x-y|^\alpha.
\end{equation}
The \textit{transfer probability} in this case is 
$P(x, B):= \mathop{\sum}\limits_{i \in I} p_i(x)\delta_{\phi_i(x)}(B)$,
and the \textit{transfer operator} $\cL: \mathcal C(V) \to \cC(V)$ is given by (see for eg \cite{BDEG}):
$$\cL(f)(x) = \int_X f(y) P(x, dy).$$
A measure $\mu$ on $V$ is called \textit{stationary} if it is a fixed point of the dual operator of $\cL$,
$$\cL^*(\nu)(B) = \int P(x, B) d\nu(x) = \mathop{\sum}\limits_{i \in I} \int_{\phi_i^{-1}(B)} p_i(x) d\nu(x).$$

Define also the \textit{Lyapunov exponent} of a shift-invariant measure $\mu$ on $\Sigma_I^+$ by:
$$\chi_\mu := -\int_{\Sigma_I^+} \log |\phi'_{\omega_0}(\pi(\sigma\omega))| \ d\mu(\omega).$$

Let us now recall the notion of \textit{projection entropy} for an invariant measure of a countable iterated function system from \cite{MU-Adv}. This is a generalization of the notion of projection entropy from the finite case of \cite{FH}, and is useful in formulas for the dimension of invariant measures in IFS with overlaps (see \cite{FH}, \cite{M-CCM}, \cite{MU-Adv}). 

In \cite{MU-Adv} we defined the projection entropy in the more general case of random countable iterated function systems, but here we need it only for countable deterministic systems. 
So let $\mathcal S = \{\phi_i, i \in I\}$ be a countable system and $\mu$ be a $\sigma$-invariant probability measure on $\Sigma_I^+$. Denote by $\xi$ the partition of $\Sigma_I^+$ into initial 1-cylinders, and by $\epsilon_{\R^D}$ the point partition of $\R^D$, and by $\pi:\Sigma_I^+ \to \Lambda$ the canonical coding map for the limit set $\Lambda\subset \R^D$ of the function system $\mathcal S$. Then $\pi^{-1}\epsilon_{\R^D}$ and $\sigma^{-1}(\pi^{-1}\epsilon_{\R^D})$ are measurable partitions of $\Sigma_I^+$.  The \textit{projection entropy} of $\mu$ with respect to $\mathcal S$ is defined then by, 
\begin{equation}\label{pent}
h_\mu(\mathcal S) := H_\mu(\xi|\sigma^{-1}(\pi^{-1}\epsilon_{\R^D})) - H_\mu(\xi|\pi^{-1}\epsilon_{\R^D}).
\end{equation}

\

We prove now that the stationary measure from Theorem \ref{existsta} is exact dimensional.

\begin{thm}\label{infinitepdp}
In the above setting if the system $\mathcal S =\{\phi_i, i \in I\}$ is countable and conformal and if the probabilities $\{p_i(\cdot), i \in I\}$ satisfy (\ref{sum1})-(\ref{Ho}), then the stationary measure $\tilde\mu_P$ for the system $\mathcal S$ with place-dependent probabilities $P = \{p_i(\cdot), i \in I\}$ is exact dimensional, and $$HD(\tilde\mu_P)  =  \frac{h_{\mu_\psi}(\mathcal S)}{\chi_{\mu_\psi}}\le \frac{h_{\mu_\psi}(\sigma)}{\chi_{\mu_\psi}},$$ where $h_{\mu_\psi}(\mathcal S)$ is the projection entropy of $\mu_\psi$ with respect to $\mathcal S$, and $\psi: \Sigma_I^+ \to \mathbb R,  \psi(\omega):= \log p_{\omega_0}(\pi(\sigma \omega)),  \omega \in \Sigma_I^+$, and $\mu_\psi$ is the equilibrium measure of $\psi$ on $\Sigma_I^+$.

\end{thm}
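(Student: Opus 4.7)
The plan is to realize the stationary measure $\tilde\mu_P$ as the projection under $\pi$ of the equilibrium state $\mu_\psi$, and then to invoke the exact dimensionality theorem for projections of ergodic measures for countable conformal IFS with overlaps established in \cite{MU-Adv}.

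First I would verify that the symbolic potential $\psi(\omega) = \log p_{\omega_0}(\pi(\sigma\omega))$ falls under the thermodynamic formalism recalled in the introduction. H\"older continuity of $\psi$ on $\Sigma_I^+$ follows by combining the uniform H\"older estimate (\ref{Ho}) on the weights with the H\"older continuity of $\pi:\Sigma_I^+ \to \Lambda$, itself a direct consequence of the uniform contraction hypothesis (\ref{infIFS}) with ratio $s \in (0,1)$. For summability of $\psi$ in the sense required in \cite{MU-ETDS}, one has $\exp(\sup \psi|_{[i]}) \le \sup_{V} p_i$, so the uniform H\"older control of the weights, combined with $\sum_{i\in I}p_i(x)=1$, yields the needed finiteness after splitting $p_i$ into its infimum plus an oscillation bounded by $C\,\mathrm{diam}(V)^\alpha$ (one absorbs the tail via the normalization $\sum p_i\equiv 1$). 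Once $\psi$ is H\"older and summable, the results recalled after (\ref{1082305}) provide a unique ergodic Gibbs/equilibrium state $\mu_\psi$ on $\Sigma_I^+$.

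The key observation is that $\psi$ is already \emph{normalized}: the symbolic transfer operator $L_\psi$ satisfies
\[
L_\psi \mathbf{1}(\omega) \;=\; \sum_{i\in I} e^{\psi(i\omega)} \;=\; \sum_{i\in I} p_i(\pi(\omega)) \;=\; 1,
\]
so $\mathrm{P}(\psi)=0$ and $L_\psi^*\mu_\psi = \mu_\psi$. Setting $\tilde\mu_P := \pi_*\mu_\psi$, I would then verify stationarity by a direct computation using the commutation $\phi_i \circ \pi = \pi \circ (i\cdot)$, where $i\cdot$ denotes prepending the symbol $i$: for any Borel $B\subset V$,
\[
\mathcal L^*\tilde\mu_P(B) = \int \sum_{i\in I} p_i(\pi(\omega))\,\mathbf{1}_B(\phi_i(\pi(\omega)))\,d\mu_\psi(\omega) = \int L_\psi(\mathbf{1}_B\circ\pi)\,d\mu_\psi = \mu_\psi(\pi^{-1}B) = \tilde\mu_P(B).
\]
Uniqueness and the attractivity property then follow from the spectral gap of $L_\psi$ on an appropriate H\"older space, which is standard for Gibbs states of summable H\"older potentials on $\Sigma_I^+$.

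With $\tilde\mu_P = \pi_*\mu_\psi$ identified, exact dimensionality and the dimension formula are reduced to applying the main theorem of \cite{MU-Adv}, which states that for a countable conformal IFS with arbitrary overlaps, the projection of an ergodic shift-invariant measure satisfying a finite projection entropy condition is exact dimensional with Hausdorff dimension equal to $h_\mu(\mathcal S)/\chi_\mu$. Here $\mu_\psi$ is ergodic by the Gibbs property; the Lyapunov exponent $\chi_{\mu_\psi}$ is well-defined and positive (bounded below by $-\log s>0$) thanks to (\ref{infIFS}); and the finite entropy condition for projection entropy holds because $h_{\mu_\psi}(\sigma)\le -\int\psi\,d\mu_\psi < \infty$ by summability of $\psi$ and since projection entropy is always dominated by Kolmogorov--Sinai entropy, yielding simultaneously the asserted inequality $h_{\mu_\psi}(\mathcal S)\le h_{\mu_\psi}(\sigma)$. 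The step I expect to require the most care is the verification of the finite projection entropy hypothesis of \cite{MU-Adv} under the present place-dependent setting and the passage from pointwise to Hausdorff dimension for the non-compact limit set $\Lambda$; once these are established, the dimension formula drops out of \cite{MU-Adv} directly.
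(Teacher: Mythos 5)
Your overall strategy coincides with the paper's: introduce the symbolic potential $\psi(\omega)=\log p_{\omega_0}(\pi(\sigma\omega))$, identify the stationary measure with the $\pi$-projection of the associated Gibbs/equilibrium data on $\Sigma_I^+$, invoke the exact dimensionality theorem of \cite{MU-Adv} for projections of ergodic shift-invariant measures of countable conformal IFS with overlaps, and read off $h_{\mu_\psi}(\mathcal S)\le h_{\mu_\psi}(\sigma)$ from the definition of projection entropy. Where you genuinely diverge is in the identification step: you observe that $\cL_\psi\mathbf 1=\mathbf 1$, hence $\P(\psi)=0$, and conclude that the fixed measure of $\cL_\psi^*$ is itself shift-invariant and coincides with the equilibrium state $\mu_\psi$, so that $\tilde\mu_P=\pi_*\mu_\psi$ directly. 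The paper instead keeps the eigenmeasure $\tilde\nu_\psi$ and the invariant measure $\mu_\psi$ as a priori distinct objects, relates them by a continuous density $\theta$ bounded above and below away from zero, applies \cite{MU-Adv} to $\mu_P=\pi_*\mu_\psi$, and then transfers exact dimensionality to $\tilde\mu_P=\pi_*\tilde\nu_\psi$ through the bounded density. Your normalization remark shows that the paper's $\theta$ is identically $1$, so your route is a legitimate (and cleaner) shortcut; your direct verification of $\cL^*\tilde\mu_P=\tilde\mu_P$ via $\phi_i\circ\pi=\pi\circ(i\cdot)$ is correct once $\cL_\psi^*\mu_\psi=\mu_\psi$ is granted.

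One sub-argument is wrong as written: your proof of summability. Splitting $p_i=\inf_V p_i+(\text{oscillation})$ with the oscillation bounded by $C\,\mathrm{diam}(V)^\alpha$ gives $\sum_{i\in I}\sup_V p_i\le 1+\sum_{i\in I}C\,\mathrm{diam}(V)^\alpha$, and the second sum is infinite for a countable alphabet; nothing in the normalization ``absorbs'' this tail. (The paper asserts summability without proof, so this is a shared weak point, but your explicit argument does not close it.) The repair is available from your own normalization observation: once $\psi$ is H\"older on $(\Sigma_I^+,d_\beta)$ one has $\sup\psi|_{[i]}\le\psi(i\tau)+H'$ with $H'$ independent of $i$ for any fixed $\tau\in\Sigma_I^+$, whence $\sum_i e^{\sup\psi|_{[i]}}\le e^{H'}\sum_i e^{\psi(i\tau)}=e^{H'}\cL_\psi\mathbf 1(\tau)=e^{H'}<\infty$. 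Be aware, finally, that the H\"older continuity of $\psi$ is itself more delicate than you (and the paper) indicate: condition (\ref{Ho}) controls the oscillation of $p_i$, not of $\log p_i$, so passing to the logarithm needs an additional lower bound on the $p_i$ or a H\"older condition on $\log p_i$; this hypothesis is implicit in both your argument and the paper's.
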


\begin{proof}
First let us define  the potential $\psi: \Sigma_I^+ \to \mathbb R$ by:
$$
\psi(\omega):= \log p_{\omega_0}(\pi(\sigma \omega)), \ \omega \in \Sigma_I^+.$$
 From the conditions (\ref{sum1}) and (\ref{Ho}), it follows  that $\psi$ is summable and H\"older continuous on $\Sigma_I^+$. Then there exists an equilibrium measure $\mu_\psi$ on $\Sigma_I^+$, which projects to the probability measure $\nu_p$ on $\Lambda$, associated to the system of weights $\textbf p:=(p_i, i \in I)$. 
The transfer operator $\cL: \mathcal C(\Sigma_I^+) \to C(\Sigma_I^+)$ is in this case, 
$$\cL(\phi)(\omega):= \mathop{\sum}\limits_{i=1}^\infty p_i(\pi\omega) \phi(i\omega),$$
where $\mathcal C(\Sigma_I^+)$ is the space of (bounded) continuous real-valued functions on $\Sigma_I^+$.
We see from above that $$\cL(\phi)(\omega) = \mathop{\sum}\limits_{i=1}^\infty e^{\psi(i\omega)} \phi(i\omega).$$
For such transfer operators, it was proved (see \cite{gdms}) that if $\psi$ is 
H\"older continuous and summable, then there exists a unique fixed probability measure $\tilde\nu_\psi$ on $\Sigma_I^+$, such that $$\cL^*(\tilde\nu_\psi) = \tilde\nu_\psi.$$

The projection, denoted by $\tilde\mu_P$, of the measure $\tilde\nu_\psi$ onto the limit set $\Lambda$ of $\mathcal S$, is the stationary measure of the system $\mathcal S$ with the place-dependent probabilities $P$.
Hence, $$\tilde \mu_P = \pi_*\tilde\nu_\psi$$
On the other hand, for the expanding map $\sigma: \Sigma_I^+ \to \Sigma_I^+$ and the H\"older continuous potential $\psi$, let us notice that there exists also a shift-invariant equilibrium measure $\mu_\psi$ of $\psi$ on $\Sigma_I^+$, and moreover  there exists a function $\theta$ such that $$\theta\tilde\mu_\psi = \mu_\psi$$
Moreover there exists a constant $M$ depending on $\psi$, such that the above function $\theta$ satisfies $$\theta \ge M>0.$$ 
On the other hand, the projection of the shift-invariant equilibrium measure $\mu_\psi$ onto the limit set $\Lambda$ is denoted by $\mu_P$ and we  have $$\mu_P = \pi_* \mu_\psi.$$
We proved in \cite{MU-Adv} that the projection $\mu_P$ of the invariant measure $\mu_\psi$ is exact dimensional even when the system $\mathcal S$ has overlaps; and we found a formula for its pointwise dimension, involving the \textit{projection entropy} recalled above in (\ref{pent}). Indeed in the main Theorem of \cite{MU-Adv}, it is enough to consider a random countable iterated function system where the parameter space consists of only one point, and to take the identity as the evolution map on the space of  parameters.
This means that for $\mu_P$-a.e $x \in \Lambda$,
\begin{equation}\label{dimmu}
\mathop{\lim}\limits_{r\to 0} \frac{\log \mu_P(B(x, r))}{\log r} = \delta =  \frac{h_{\mu_\psi}(\mathcal S)}{\chi_{\mu_\psi}},
\end{equation}
and $\delta$ does not depend on $x \in \Lambda$.
But $\mu_P(B(x, r)) = \mu_\psi(\pi^{-1}(B(x, r)))$, and we know that $\mu_\psi = \theta \tilde \mu_\psi$, hence 
$$\tilde \mu_\psi(\pi^{-1}(B(x, r))) = \int_{\pi^{-1}(B(x, r))} \theta d\mu_\psi.$$
On the other hand let us recall that $\theta > M$ and that $\theta$ is a continuous bounded function on $\Sigma_I^+$. Hence using (\ref{dimmu}) and the fact that $\tilde \mu_P = \pi_* \tilde \mu_\psi$, we see that for $\mu_P$-a.e $x\in \Lambda$,
\begin{equation}\label{ptwsta}
\mathop{\lim}\limits_{r\to 0} \frac{\log \tilde \mu_P(B(x, r))}{\log r} = \delta.
\end{equation}

Therefore, the stationary measure $\tilde \mu_p$ of the system $\mathcal S$ with the place-dependent probabilities $P$ is exact dimensional, and from (\ref{ptwsta}) its Hausdorff (and pointwise) dimension is given by:
\begin{equation}\label{dimform}
HD(\tilde \mu_p) = \frac{h_{\mu_\psi}(\mathcal S)}{\chi_{\mu_\psi}},
\end{equation}
where $h_{\mu_\psi}(\mathcal S)$ is the projection entropy of  $\mu_\psi$ with respect to $\mathcal S$ and 
$ \chi_{\mu_\psi}$ is its Lyapunov exponent. From the definition (\ref{pent}) of the projection entropy $h_{\mu_\psi}(\mathcal S)$, it follows that $$h_{\mu_\psi}(\mathcal S) \le h_{\mu_\psi}(\sigma).$$ Therefore, we obtain the desired dimension formula.

\end{proof}

\section{Unfolding of countable IFS with overlaps and families of fractals.}\label{maximal}

We want now to associate  a \textit{Smale endomorphism} to a \textit{countable iterated function systems with overlaps $\mathcal S$}, by unfolding, in such a way to control the structure of overlappings. Then we will consider equilibrium measures of real-valued summable functions $\psi$ on $\Sigma_I$, and will study their projection measures on families of subfractals in the limit set $\Lambda$ of the  iterated function system $\mathcal S$.

Let us consider a countable conformal IFS with overlaps $\mathcal S = \{\phi_i, i \in I\}$, where the maps $\phi_i:V \to V$ are conformal contractions defined on a neighbourhood of a compact set $V \subset \mathbb R^D$, and $|\phi_i'| < \alpha < 1, i \in I$ on $V$. Denote by $\Lambda$ the limit set of $\mathcal S$, and assume that $\Lambda$ is not contained in the boundary of $V$. Since we work with a countable system, the limit set $\Lambda$ may be non-compact.

 Assume next that $I = \mathbb N^*$ and that our system $\mathcal S$ satisfies the Bounded Distortion Property, i.e
there exist constants  $H>0, \beta>0$ such that for all $i \in I$,
\begin{equation}\label{BDP}
|\log|\phi_i'(y)| - \log|\phi_i'(z)|| \le H|y-z|^\beta, \ \forall y, z \in V.
\end{equation} 

\

We will associate to the countable system with overlaps $\mathcal S$, a Smale skew product $T$ (and a random system with complete connections), with the goal  to separate the images of the compositions of maps $\phi_i$ along any given sequence $\omega = (\omega_0, \omega_1, \ldots) \in \Sigma_I^+$. This is realised by an inductive process of \textit{unfolding the overlaps} of $\cS$. 

Recall that $\Lambda$ is the limit set of $\mathcal S$.
Assume that, for any point  $x\in \Lambda$, the $\mathcal S$-images $\phi_i(x), i \in I$ of $x$,  satisfy the following \textbf{Non-accumulation Condition}:
\begin{equation}\label{nonrec}
\phi_i(x) \notin \ \overline{\{\phi_j(x), j \in I \setminus \{i\}\}}, \ \forall i \in I.
\end{equation}
This condition is quite general, and it can be checked on many systems (see for instance  examples in   \cite{MU-CM}).
 Recall that $\pi(\omega) = \phi_{\omega_0\omega_1\ldots}$ is the canonical projection from $\Sigma_I^+$ to the limit set $\Lambda$. Then, for an arbitrary $\omega = (\omega_0, \omega_1, \ldots) \in \Sigma_I^+$, we define inductively the contractions $T_{i\omega}$ for $i \in I$.
Let us start by defining $$T_{1\omega}:= \phi_{1\omega_0\ldots \omega_{n_1(\omega)}} = \phi_{1}\circ \phi_{\om_0}\circ \ldots \circ \phi_{\om_{n_1(\om)}},$$ where $n_1(\omega)$ is defined as the smallest integer $n_1\ge 1$ such that $$\phi_j(\pi(\omega)) \notin \phi_{1\omega_0\ldots \omega_{n_1}}(V),  \ \text{for all} \  j \ne 1$$ 
Next, since from above $\phi_2(\pi(\omega)) \notin \phi_{1\omega_0\ldots \omega_{n_1}}(V)$, take  $n_2(\omega)$ to be the smallest integer $n_2 >n_1$ such that $$\phi_{2\omega_0\ldots \omega_{n_2(\omega)}}(V) \cap \phi_{1\omega_0\ldots \omega_{n_1(\omega)}}(V) = \emptyset, \ \text{and} \ \ \phi_j(\pi\omega) \notin \phi_{2\omega_0\ldots \omega_{n_2(\omega)}}(V), \ \text{for} \ j \ne 2$$
Then we define $$T_{2\omega} := \phi_{2\omega_0\ldots \omega_{n_2(\omega)}}$$

Inductively, if we defined $n_k(\omega)$ up to some $k \ge 1$, we now define $n_{k+1}(\omega) \ge 1$ as the smallest integer $n_{k+1} > n_k$ with the property that  
\begin{equation}\label{indk}
\phi_{(k+1)\omega_0\ldots\omega_{n_{k+1}(\omega)}}(V) \cap \phi_{j\omega_0 \ldots \omega_{n_j(\omega)}}(V) = \emptyset, \ 1\le j \le k, \ \text{and} \ \ \phi_{\ell}(\pi\omega) \notin \phi_{(k+1)\omega_0\ldots\omega_{n_{k+1}(\omega)}}(V), \ell \ne k+1,
\end{equation}
and then the fiber map
\begin{equation}\label{Tk}
T_{k+1 \omega}:= \phi_{(k+1)\omega_0\ldots \omega_{n_{k+1}(\omega)}}
\end{equation}

Thus for any $\om \in \Sigma_I^+$, we construct as above a contraction map $T_\om: V \to V$,
\begin{equation}\label{contmapT}
T_\om = T_{\om_0\sigma(\om)}.
\end{equation}

\begin{dfn}\label{mSs}
Using (\ref{contmapT}) let us  define the space $\hat Y := \Sigma_I^+ \times V$, and the skew product $T: \hat Y \to \hat Y$, 
$$T(\omega, x) = (\sigma(\omega), T_\omega(x)), \ (\omega, x) \in \hat Y.$$
We will call $T: \Sigma_I^+\times V \to \Sigma_I^+ \times V$ the \textbf{maximal Smale system}  associated to the countable IFS with overlaps $\mathcal S$. 
\end{dfn}

From definition we see that the map $T_\omega$ depends on the whole sequence $\omega \in \Sigma_I^+$, and not just on the projection point $\pi(\omega)$. Thus the dynamical system $(\hat Y, T)$ describes \textit{how} the overlappings of $\mathcal S$  are formed through succesive iterations.

Consider now a sequence $\tau \in \Sigma_I$, and for any $n \ge 1$  define the composition map $$T_\tau^n = T_{\tau_{-1}^\infty}\circ T_{\tau|_{-2}^\infty} \circ \ldots \circ T_{\tau|_{-n}^\infty}.$$
Then $\hat\pi_2(\tau) = \mathop{\cap}\limits_{n=1}^\infty \overline{T_\tau^n(V)}$, and the fractal $J_\omega = \hat\pi_2([\omega])$ is contained in $\Lambda$, where $[\omega]$ is the cylinder in $\Sigma_I$ determined by $\omega \in \Sigma_I^+$. 

\

If $\mu$ is a probability measure on $\Sigma_I$, define its \textit{Lyapunov exponent with respect to $T$} as 
$$\chi_{\mu}(T) = - \int_{\Sigma_I} \log |T'_{\tau|_0^\infty}(\hat\pi_2(\tau))| \ d\mu(\tau).$$ 
Denote by $\pi_0:\Sigma_I \to \Sigma_I^+$, $\pi_0(\tau) = (\tau_0, \tau_1, \ldots)$, the canonical truncation map.

\

We constructed above the family of random fractals $J_\omega\subset \Lambda, \omega \in \Sigma_I^+$, and now we prove that certain  probability measures are exact dimensional on $J_\om$.

\begin{thm}\label{exactmax}
Let a countable IFS with overlaps $\mathcal S$ as above satisfying (\ref{nonrec}), and let  $T$ be its associated maximal Smale system. Consider $\psi:\Sigma_I \to \mathbb R$ summable H\"older continuous function with equilibrium measure $\mu_\psi$,  and  let $\nu_\psi$ be the canonical projection $\pi_{0, *}\mu_\psi$ of $\mu_\psi$ on $\Sigma_I^+$. Take the conditional measure $\mu^\omega_\psi$ of $\mu_\psi$ on the cylinder $[\omega]$ and let $\nu_\psi^\omega := \hat\pi_{2 *} \mu^\omega_\psi$ be its projection on $J_\omega$, with $\hat\pi_{2}$ defined in \ref{5111705p141}.
Then, for $\nu_\psi$-a.e $\omega \in \Sigma_I^+$, the measure $\nu^\omega_\psi$ is exact dimensional on the subfractal $J_\om \subset \Lambda$, and 
$$
HD(\nu^\omega_\psi) = \frac{h_{\mu_\psi}(\sigma)}{\chi_{\mu_\psi}(T)}.$$
\end{thm}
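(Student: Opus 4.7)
The plan is to exploit two features of the maximal Smale system $T$: first, by construction, the fiber maps $T_{a\omega}$ for different $a\in I$ have pairwise disjoint images, which is a fiberwise Open Set Condition making $\hat\pi_2|_{[\omega]}$ injective onto $J_\omega$; second, $\mu_\psi$ is a Gibbs measure whose conditional measures on cylinders $[\omega]$ carry Gibbs-type estimates on negative-past cylinders. Once these are in place, the pointwise dimension of $\nu_\psi^\omega=\hat\pi_{2*}\mu_\psi^\omega$ is computed by the classical scheme: Shannon--McMillan--Breiman gives the mass of a past cylinder, Birkhoff plus bounded distortion gives the diameter of its $\hat\pi_2$-image, and matching the two scales produces the ratio $h_{\mu_\psi}(\sigma)/\chi_{\mu_\psi}(T)$.

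First I would verify that $T:\hat Y\to\hat Y$ is a conformal Smale endomorphism in the sense of conditions (a)--(g), taking $Y=Y_\omega=V$. Finiteness of the integers $n_k(\omega)$ of (\ref{indk}) follows from the Non-accumulation Condition (\ref{nonrec}) together with the uniform contraction $|\phi_i'|<\alpha$: at each step only finitely many previous images can obstruct the minimum, and further iterations shrink past any fixed neighbourhood. Items (a)--(c) are immediate; (d) follows from (\ref{BDP}) by telescoping distortions along a composition with geometric ratios; (e) follows by combining (d) with geometric contraction of $\hat\pi_2$. The key property (f), $T_{a\omega}(V)\cap T_{b\omega}(V)=\emptyset$ for $a\ne b$, is built into the very definition (\ref{indk}). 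A direct consequence is that $\hat\pi_2|_{[\omega]}:[\omega]\to J_\omega$ is injective: if $\tau,\tau'\in [\omega]$ first disagree at coordinate $-k$ and $\omega'=\tau|_{-k+1}^{\infty}$, then $T_{\tau_{-k}\omega'}(V)$ and $T_{\tau'_{-k}\omega'}(V)$ are disjoint by (f), and the remaining compositions $T_{\tau|_{-1}^{\infty}}\circ\cdots\circ T_{\tau|_{-k+1}^{\infty}}$ (which agree for $\tau$ and $\tau'$) are injective, so $\hat\pi_2(\tau)\neq\hat\pi_2(\tau')$. Thus $\hat\pi_2$ is a measurable bijection from $([\omega],\mu_\psi^\omega)$ onto $(J_\omega,\nu_\psi^\omega)$, and the local dimension calculation is reduced to comparing the $\mu_\psi^\omega$-mass of a negative-past cylinder $\{\tau':\tau'|_{-n}^{-1}=\tau|_{-n}^{-1}\}$ with the Euclidean size of its image $T_\tau^n(V)$.

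Next I would quantify two asymptotic rates for $\mu_\psi$-generic $\tau\in\Sigma_I$. Birkhoff's theorem applied to the H\"older function $\tau\mapsto-\log|T'_{\tau|_0^{\infty}}(\hat\pi_2(\tau))|$ (integrable by summability of $\psi$ and bounded distortion), combined with property (d), gives
\begin{equation*}
\lim_{n\to\infty}-\frac{1}{n}\log\diam\bigl(T_\tau^n(V)\bigr)=\chi_{\mu_\psi}(T).
\end{equation*}
The Shannon--McMillan--Breiman theorem applied to the countable alphabet system $(\Sigma_I,\sigma,\mu_\psi)$ along the refining partition by negative cylinders gives
\begin{equation*}
\lim_{n\to\infty}-\frac{1}{n}\log\mu_\psi\bigl([\tau|_{-n}^{0}]\bigr)=h_{\mu_\psi}(\sigma),
\end{equation*}
and the Gibbs property, combined with the disintegration identity, ensures that the same exponential rate governs $\mu_\psi^\omega\bigl(\{\tau':\tau'|_{-n}^{-1}=\tau|_{-n}^{-1}\}\bigr)$ for $\nu_\psi$-a.e.\ $\omega$.

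Finally, for small $r>0$ I would pick $n(r)$ with $\diam(T_\tau^{n(r)}(V))\asymp r$, so $n(r)\sim \log(1/r)/\chi_{\mu_\psi}(T)$. The Open Set Condition (f) together with bounded distortion and a Besicovitch-type covering estimate in $V\subset\mathbb R^D$ shows that only a uniformly bounded number of level-$n(r)$ past cylinders have $\hat\pi_2$-image meeting $B(\hat\pi_2(\tau),r)$, and so
\begin{equation*}
\nu_\psi^\omega(B(\hat\pi_2(\tau),r))\asymp \mu_\psi^\omega\bigl(\{\tau':\tau'|_{-n(r)}^{-1}=\tau|_{-n(r)}^{-1}\}\bigr)\asymp r^{h_{\mu_\psi}(\sigma)/\chi_{\mu_\psi}(T)},
\end{equation*}
proving exact dimensionality and the claimed formula. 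The main obstacle is exactly this covering step: since $I$ is countable and $\Lambda$ need not be compact, the bound on the number of comparable-scale cylinders intersecting a ball cannot be taken universal, but must be extracted on a full-measure set via a Lusin-type exhaustion with constants that are only measurable in $\omega$ and $\tau$; verifying property (e) despite the discontinuous dependence of the compositional depth $n_k(\cdot)$ on $\omega$ is a secondary but nontrivial technical point.
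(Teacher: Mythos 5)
Your first half coincides with the paper's proof: the paper likewise verifies that the maximal Smale system satisfies the axioms of a conformal Smale endomorphism --- the fiberwise disjointness $T_{i\omega}(V)\cap T_{j\omega}(V)=\emptyset$ straight from (\ref{indk}), uniform contraction from $|\phi_i'|<\alpha$, bounded distortion by the telescoping geometric sum, and H\"older continuity of $\tau\mapsto\log|T'_\tau(\hat\pi_2(\tau))|$ from the local constancy of $T_\tau$ on two-sided cylinders together with the exponential contraction of $\hat\pi_2$. Where you diverge is in what happens next: the paper at that point simply invokes the exact-dimensionality theorem for conditional projection measures of conformal Smale endomorphisms proved in \cite{MU-ETDS}, whereas you attempt to re-derive that theorem from scratch via Shannon--McMillan--Breiman, Birkhoff, and a ball-versus-cylinder covering comparison. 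That is a legitimate alternative in principle --- it is essentially the proof of the cited result --- but it means you have taken on the full burden of the hardest part of \cite{MU-ETDS} rather than a short verification.

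And that burden is not discharged. The covering step you flag as ``the main obstacle'' is a genuine gap, not a technicality: with a countable alphabet the contraction ratios $|T_{i\omega}'|$ are not bounded below, so a single symbol can shrink a set by an arbitrarily large factor, and the standard Moran-cover argument (stop each past at the first $n$ with $\diam(T_{\tau}^n(V))\le r$ and conclude the stopped sets have diameter comparable to $r$, hence bounded multiplicity against $B(x,r)$) fails outright. Your proposed fix --- ``a Lusin-type exhaustion with constants that are only measurable in $\omega$ and $\tau$'' --- is not an argument; controlling $\nu_\psi^\omega(B(x,r))$ from below by the measure of a single past cylinder of comparable geometric size is precisely the content of the theorem the paper cites, and it requires the Strong Open Set Condition (g) and a careful density/Besicovitch argument that you do not supply. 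Two smaller points: your claim that $\log|T'_{\tau|_0^\infty}(\hat\pi_2(\tau))|$ is integrable ``by summability of $\psi$'' is a non sequitur (summability of $\psi$ says nothing about the Lyapunov integral, and finiteness of $\chi_{\mu_\psi}(T)$ and of $h_{\mu_\psi}(\sigma)$ both need separate justification over a countable alphabet); and the transfer of the SMB rate from $\mu_\psi([\tau|_{-n}^{0}])$ to the conditional measures $\mu_\psi^\omega$ uses the Gibbs property in a way you assert but do not prove. If you intend a self-contained proof you must close the covering step; otherwise the efficient route is the paper's, namely to verify conditions (a)--(g) and quote \cite{MU-ETDS}.
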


\begin{proof}

First let us  notice that, from our construction, for any  $i \ne j$, $$T_{i\omega}(V) \cap T_{j\omega}(V) = \emptyset. $$ Therefore the open set condition in fibers from the definition of the Smale skew-product is satisfied. Moreover if $||\phi_i'|| < \alpha < 1, i \in I$, it follows that the same uniform contractivity condition is satisfied by all the maps $T_\omega, \omega \in \Sigma_I^+$. Hence the uniform contractivity of the maps $T_\omega$ is satisfied too.

Let us see now if the maps $T_\omega$ satisfy Bounded Distortion Property (BDP).
For this consider an arbitrary $\omega \in \Sigma_I^+$. Then $T_\omega = \phi_{\omega_0\omega_1\ldots \omega_n}$ for some integer $n$ which depends on $\omega$. 
Thus, there exists $L \in (0, 1)$ and $H'>0$ such that for any $y, z \in V$, we have
$$
\begin{aligned}
&|\log|T_{\omega}'(y)| - \log|T_\omega'(z)||  \le \\
&\le |\log |\phi_{\omega_0}(\phi_{\omega_1\ldots\omega_n}(y))| - \log|\phi_{\omega_0}(\phi_{\omega_1\ldots\omega_n}(z))| | + \ldots + |\log|\phi_{\omega_n}'(y)| - \log |\phi_{\omega_n}'(z)|| \le \\
& \le H|y-z|^\beta (1+L + \ldots + L^n) = H' |y-z|^\beta
\end{aligned}
$$

Another condition in the definition of a conformal Smale skew product is the H\"older continuity of the real-valued map on $\Sigma_I$ given by:
\begin{equation}\label{holderlog}
\tau \longrightarrow \log|T'_\tau(\hat \pi_2(\tau))|
\end{equation}

Let us take $\omega \in \Sigma_I^+$ and $\tau \in \Sigma_I$ such that $\tau \in [\omega]$. Then $T_\tau = T_{\tau_{-1}\omega}$, and consider the integer $n_{\tau_{-1}}(\omega)$. Then from definition, we have $$T_{\tau_{-1}\omega} = \phi_{\tau_{-1}}\circ \phi_{\omega_0}\circ \ldots \circ \phi_{\omega_{n_{\tau_{-1}}(\omega)}}$$

But if $\eta \in [\tau_{-m}\ldots \tau_m]$ for $m \ge n_{\tau_{-1}(\omega)}$, we have that $T_\tau = T_\eta$. On the other hand, due to the form of $\hat\pi_2(\tau)$ given in (\ref{pi2hat}), we have $$d(\hat\pi_2(\tau), \hat \pi_2(\eta)) \le C\frac{1}{2^m},$$
for some constant $C$ independent of $\tau, \eta, m$.
Hence the map from (\ref{holderlog}) is indeed H\"older continuous on $\Sigma_I$. 

Therefore, the maximal Smale system $T: \hat Y \to \hat Y$ defined above,  where we let $$T_{\tau|_0^\infty} = \phi_{\tau_0 \tau_1\ldots \tau_{n(\sigma\tau|_0^\infty)}},$$ satisfies the properties of a conformal Smale skew product endomorphism. We then apply our result from \cite{MU-ETDS}, to  obtain the exact dimensionality and the formula for the Hausdorff dimension of the projection measures $\nu^\omega_\psi$ on $J_\omega \subset \Lambda$. In particular, it follows that for $\nu_\psi$-a.e $\om \in \Sigma_I^+$,
$$
HD(\nu^\omega_\psi) = \frac{h_{\mu_\psi}(\sigma)}{\chi_{\mu_\psi}(T)},$$
where $h_{\mu_\psi}(\sigma)$ is the entropy of the measure $\mu_\psi$, and $\chi_{\mu_\psi}(T)$ is its Lyapunov exponent.

\end{proof}

Examples of functions $\psi$, based on the maximal Smale system $T$ associated to a countable IFS with overlaps  $\mathcal S$ as above, are given next. In Theorem \ref{geom} we also construct a family of measures on subfractals in the limit set $\Lambda$, which are related to the intricate geometry of  $S$. 

\begin{thm}\label{geom}
In the setting of Theorem \ref{exactmax} let the countable IFS $\mathcal S$ with limit set $\Lambda$, and for any $s>0$ define the function $\psi_s: \Sigma_I \to \R$,
 $$\psi_s(\eta) = s\log|T_{\eta|_0^\infty}'(\hat\pi_2(\eta))|, \ \eta \in \Sigma_I.$$  
Then for any $s>0$,

 a) 
  $\psi_s$ is summable and H\"older continuous on $\Sigma_I$. 
 
 b) If $\mu_{s}:= \mu_{\psi_s}$ is the equilibrium measure of $\psi_s$ on $\Sigma_I$, and $\nu_s:= \pi_{0*}\mu_s$ on $\Sigma_I^+$, then for $\nu_s$-a.e. $\om \in \Sigma_I^+$, the measure $\nu_s^\om:= \nu_{\psi_s}^\om$ is exact dimensional on  $J_\om \subset \Lambda$ and, $$HD(\nu_s^\om) = \frac{h_{\mu_{s}}(\sigma)}{ \big|\int_{\Sigma_I}\log |\phi_{\tau_0 \tau_1\ldots \tau_{n_{\tau_0}(\sigma\tau)}}'(\hat\pi_2(\tau))| \ d\mu_{s}(\tau)\big|}.$$ 
 \end{thm}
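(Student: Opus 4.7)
My plan is to recognize Theorem \ref{geom} as a direct application of Theorem \ref{exactmax} to the specific potential family $\psi_s$. Thus the main task for part (a) is to verify that each $\psi_s$ is H\"older continuous and summable on $\Sigma_I$. H\"older continuity is essentially free: writing $\psi_s = s\cdot \Psi$ with $\Psi(\eta) = \log|T'_{\eta|_0^\infty}(\hat\pi_2(\eta))|$, the H\"older continuity of $\Psi$ on $\Sigma_I$ was already established in the proof of Theorem \ref{exactmax}. The argument rested on two facts: $T_\tau$ stabilizes once the two-sided trace of $\tau$ is fixed past index $n_{\tau_{-1}}(\tau|_0^\infty)$; and $d(\hat\pi_2(\tau),\hat\pi_2(\eta)) \le C\lambda^{-m}$ whenever $\tau$ and $\eta$ agree on coordinates from $-m$ to $m$. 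Combined with the BDP inherited by the $T_\omega$ from the $\phi_i$, this gives H\"older control of $\Psi$, and rescaling by $s>0$ preserves it.

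For summability, the key observation is that the maximal Smale construction is specifically designed so that the fibered images $\{T_{e\omega}(V)\}_{e\in I}$ are pairwise disjoint subsets of $V$ for any fixed $\omega\in\Sigma_I^+$. Combined with BDP, this forces the uniform bound
\[
\sum_{e\in I}\sup_{y\in V}|T'_{e\omega}(y)|^D \le C,
\]
where $D$ is the ambient dimension and $C$ depends only on BDP and on $V$. Since $|T'_{e\omega}|\le\alpha<1$, the same control extends to $\sum_e \sup|T'_{e\omega}|^s$ for all $s\ge D$; for $0<s<D$ we fall back on $|T'_{e\omega}(y)|\le \|\phi'_e\|_\infty$ and the standard derivative-summability hypothesis on $\mathcal S$. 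Either way, $\sum_{e\in I}\exp(\sup\psi_s|_{[e]}) <\infty$, so by the results recalled from \cite{MU-ETDS} the unique Gibbs/equilibrium state $\mu_s = \mu_{\psi_s}$ exists on $\Sigma_I$.

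With part (a) in hand, part (b) is immediate from Theorem \ref{exactmax} applied to $\psi_s$: for $\nu_s$-a.e.\ $\omega$, the measure $\nu^\omega_s$ is exact dimensional on $J_\omega$ with $HD(\nu^\omega_s)=h_{\mu_s}(\sigma)/\chi_{\mu_s}(T)$. It only remains to rewrite the Lyapunov exponent in the advertised form. From the definition of $T$ we have $T_{\tau|_0^\infty} = \phi_{\tau_0\tau_1\ldots\tau_{n_{\tau_0}(\sigma\tau)}}$, so
\[
\chi_{\mu_s}(T) = -\int_{\Sigma_I}\log|T'_{\tau|_0^\infty}(\hat\pi_2(\tau))|\,d\mu_s(\tau) = \Bigl|\int_{\Sigma_I}\log|\phi'_{\tau_0\tau_1\ldots\tau_{n_{\tau_0}(\sigma\tau)}}(\hat\pi_2(\tau))|\,d\mu_s(\tau)\Bigr|,
\]
where the absolute value arises because each $\phi_i$ is a strict contraction, making the integrand negative. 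Substituting yields the formula in the statement. The main delicate point I expect is the uniform summability check for all $s>0$, which forces one to combine the geometric disjointness built into the maximal Smale construction with decay information on the original IFS's derivatives.
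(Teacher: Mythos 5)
Your part (b) and your H\"older-continuity argument in part (a) match the paper's proof: both simply invoke Theorem \ref{exactmax} for the potential $\psi_s$ and then rewrite $\chi_{\mu_s}(T)$ using the explicit form $T_{\tau|_0^\infty}=\phi_{\tau_0\ldots\tau_{n_{\tau_0}(\sigma\tau)}}$, with the absolute value accounting for the negativity of the integrand. The problem is your summability argument, which is both different from the paper's and genuinely gapped. Your volume-packing estimate from the disjointness of the sets $T_{e\omega}(V)$ can at best yield $\sum_{e}\sup|T'_{e\omega}|^{D}<\infty$, and this does \emph{not} imply $\sum_{e}\sup|T'_{e\omega}|^{s}<\infty$ for $0<s<D$ (take $\sup|T'_{e\omega}|\sim e^{-2/D}$: the $D$-sum converges but the $s$-sum diverges for $s\le D/2$). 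Your fallback for small $s$ appeals to a ``standard derivative-summability hypothesis on $\mathcal S$,'' but no such hypothesis appears anywhere in the paper --- the only standing assumption is $|\phi_i'|\le\alpha<1$. So as written your proof does not establish summability of $\psi_s$ for all $s>0$, which is exactly the range the theorem claims.

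The idea you are missing is combinatorial rather than geometric: by construction (\ref{indk}) the integers $n_1(\omega)<n_2(\omega)<\cdots$ are strictly increasing, hence $n_k(\omega)\ge k$. Since $T_{i\omega}=\phi_{i\omega_0\ldots\omega_{n_i(\omega)}}$ is a composition of at least $n_i(\omega)$ of the maps $\phi_j$, each with $|\phi_j'|\le\alpha<1$, one gets the uniform bound $|T'_{i\omega}|\le\alpha^{n_i(\omega)}\le\alpha^{i}$ for every $\omega$. Consequently
$$
\sum_{i\in I}\exp\bigl(\sup\psi_s|_{[i]}\bigr)\le\sum_{k\ge1}\alpha^{sk}<\infty
$$
for every $s>0$, with no extra hypothesis on $\mathcal S$ and no appeal to disjointness or to the ambient dimension. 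This linear growth of $n_i(\omega)$ in $i$, forced by the unfolding construction, is the whole point of part (a); replacing your volume argument by it closes the gap.
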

 
 \begin{proof}
 a) Since the maximal system $T$ associated to $\mathcal S$ by Definition \ref{mSs} was shown in Theorem \ref{exactmax} to be a conformal Smale endomorphism, it follows that $\psi_s$ is H\"older continuous on $\Sigma_I$.

Also  for $i \in I, \om \in \Sigma_I^+$, recall the definition of $T_{i\om}$ as a composition of maps given by the inductive relation (\ref{indk}). Also,  the initial contractions $\phi_i: V \to V$ from $\mathcal S$ are defined on a compact set $V\subset \R^D$, such that $|\phi_i'| \le \alpha < 1, i \in I$ on $V$.  
From the definition (\ref{indk}), we have the increasing sequence of integers, $n_1(\om) < n_2(\om) < \ldots,$ and thus for any $\om \in \Sigma_I^+$, the positive integers $n_k(\om)$ satisfy the inequality,
$$n_k(\om) \ge k, \ k \ge 1.$$
Now we have from (\ref{indk}) that $T_{i\om} = \phi_{i\om_0\ldots \om_{n_i(\om)}}$, and thus it follows from above that for any $i \in I$, $$|T_{i\om}'| \le \alpha ^{n_i(\om)} \le \alpha^{i}.$$  Therefore, since we assumed  $s>0$, it follows that, $$\mathop{\sum}\limits_{i \in I} e^{\sup\psi_s|_{[i]}} \le \mathop{\sum}\limits_{k\ge 1} \alpha^{sk}  < \infty. $$ Hence the real-valued function $\psi_s$ is also summable in this case. 
  
  b) For any $s >0$, if $\nu_s:= \nu_{\psi_s}$ is the canonical projection of the equilibrium measure $\mu_{\psi_s}$ onto $\Sigma_I^+$, then in the notation of Theorem \ref{exactmax} we have for $\nu_s$-a.e $\om \in \Sigma_I^+$,  $$\nu_s^\om := \nu_{\psi_s}^\om = \hat\pi_{2*}\mu_{\psi_s}^\om,$$ which is a measure supported on $J_\om \subset \Lambda$. Now due to the properties of $\psi_s$ proved in a), one can apply Theorem \ref{exactmax} for the measure $\nu_s^\om$ on the fiber set $J_\om \subset \Lambda$, for $\nu_s$-a.e. $\om \in \Sigma_I^+$. 
  Thus we obtain the exact dimensionality of $\nu_s^\om$ on $J_\om$, and that $$HD(\nu_s^\om) = \frac{h_{\mu_s}(\sigma)}{\chi_{\mu_s}(T)}.$$ Then, by using the expression of $T_{i\om}$ to compute $\chi_{\mu_s}(T)$ in our case, we obtain the Hausdorff dimension of $\nu_s^\om$ by the above formula.  
  
  \end{proof}
  
  \

\end{document}